\def\ps@pprintTitle{%
\let\@oddhead\@empty
\let\@evenhead\@empty
\def\@oddfoot{\centerline{\thepage}}%
\let\@evenfoot\@oddfoot}
\newtheorem{thm}{Theorem}
\newtheorem{lem}[thm]{Lemma}
\newtheorem{prop}[thm]{Proposition}
\theoremstyle{definition}
\newtheorem{defin}[thm]{Definition}
\newtheorem{example}[thm]{Example}
\theoremstyle{remark}
\def\inte{\mathop{\mathrm{int}}}
\def\N{\mathbb{N}}
\begin{document}


\begin{frontmatter}
 
\title{Chaos game for IFSs on topological spaces}
\tnotetext[t1]{Dedicated to the memory of Andrzej Lasota.}
\author{Michael F. Barnsley}
\address{The Australian National University\\
Canberra, Australia}
\ead{mbarnsley@aol.com}
 
\author{Krzysztof Le\'{s}niak}
\address{Faculty of Mathematics and Computer Science, Nicolaus Copernicus University\\
Toru\'{n}, Poland}
\ead{much@mat.umk.pl}

\author{Miroslav Rypka\fnref{ryp}}
\address{Faculty of Science, Palack\'{y} University\\ Olomouc, Czech Republic}
\ead{miroslav.rypka01@upol.cz}

\fntext[ryp]{Supported by the project  StatGIS Team No CZ.1.07/2.3.00/20.0170.}
 
\begin{abstract}
We explore the chaos game for the continuous IFSs on topological spaces.
We prove that the existence of attractor allows us to use the chaos game for visualization of attractor.
The essential role of basin of attraction is also discussed.
\end{abstract}

\begin{keyword}
iterated function system\sep strict attractor\sep basin of attractor\sep pointwise basin\sep chaos game

\MSC   28A80
\end{keyword}

\end{frontmatter}


\section{Introduction}

A chaos game, defined in Section \ref{secattr}, is a Markov chain Monte
Carlo algorithm applied to describe stationary probability distributions
supported on an attractor of an iterated function system (IFS). Chaos games
can yield efficient approximations to attractors, such as Cantor sets and
Sierpinski triangles, of well-known types of IFS, such as finite sets of
contractive similitudes on $\mathbb{R}^{n}$. They have applications in
computer graphics \cite{nickei} and digital imaging \cite{barnsuper1}.
Rigorous convergence results have been established for chaos games on IFSs
whose maps are contractive on the average, see \cite{Stenflo} and references
therein.

Recently, it has become clear (see \cite{Kameyama}, \cite%
{BarnsleyLesniakRypka}) that attractors of IFSs are of a topological nature.
Applications of the chaos game on a general IFS of continuous maps on a
metric space, without any contractivity conditions on the maps, have been
investigated, see \cite{RPGT, BarnsleyVince-Chaos}), and various somewhat
complicated convergence results have established. In this paper, we simplify
the situation by restricting attention to topological IFSs, defined in
Section \ref{secattr}. Our main result is Theorem \ref{tmain}: under very general
conditions, chaos games yield attractors of topological IFSs.

Our approach is based on understanding the subtle structure of attractors
and their basins of attraction. We proceed as follows. In Section \ref%
{secattr} we give basic definitions and instructive examples of concerning
attractors. In Section \ref{basin}, we discuss the basin of attraction of an
IFS in a topological space and the relationship to a new object, the \textit{%
pointwise basin}; this complements observations made in \cite%
{BarnsleyLesniakRypka}. Theorem \ref{tmain} is proved in Section \ref{chg} and
illustrated by Example \ref{ex:proj}. In Section \ref{conclsec} we relate
this work to the notion of a semi-attractor as defined by Lasota, Myjak and
Szarek.

\section{Attractor of general IFS}

\label{secattr} 
We use the notation and results from \cite{BarnsleyLesniakRypka}.

\begin{defin}
A \textbf{(topological) iterated function system}\textit{\ (IFS)} $\mathcal{W%
}=\{X;w_{1},w_{2},\dots ,w_{N}\}$ is a normal Hausdorff topological space $X$
together with a finite collection of continuous maps $w_{i}:X\rightarrow X$, 
$i=1,2,\dots ,N$. The associated \textbf{Hutchinson operator} $W:\mathcal{K}%
(X)\rightarrow \mathcal{K}(X)$ is defined on the family of nonempty compact
sets $S\in \mathcal{K}(X)$ by $W(S)=\bigcup_{i=1}^{N}w_{i}(S)$. The $k-$fold
composition of $W$ is denoted by $W^{k}$. Similarly we define $W(S)$ for any
set $S\subset X$.
\end{defin}

The Hutchinson operator $W$ on $\mathcal{K}(X)$ and its restriction to $X$, $%
W:X\rightarrow \mathcal{K}(X)$, are continuous, when $\mathcal{K}(X)$ is
endowed with the Vietoris topology (cf. \cite[Proposition 1.5.3 (iv)]{K}).
Without ambiguity we may also write $W^{k}(x)=W^{k}(\{x\})$. Note that $W$
has the order theoretic property $W(S_{1}\cup S_{2})=W(S_{1})\cup W(S_{2})$
for $S_{1},S_{2}\subset X$.

\begin{defin}
\label{chaosdef} A \textbf{chaos game} on the IFS $\mathcal{W=}%
\{X;w_{1},w_{2},\dots ,w_{N}\}$ comprises a sequence of points $\left\{
x_{n}\right\} _{n=0}^{\infty }$ with $x_{0}\in X$ and $x_{n}=f_{\sigma
_{n}}(x_{n-1})$ for $n=1,2,...$ where $\left\{ \sigma _{n}\right\}
_{n=1}^{\infty }$ is a sequence of random variables with values in $%
\{1,2,...,N\}$. (In some cases, but not in this paper, it is required that
these random variables are independent and identically distributed.) In this
paper, in line with realistic applications, we require only that there is a
probability $p>0$ so that for each $n$, $P(\sigma _{n}=m)>p$ for all $m\in
\{1,2,...N\},$ independently of the outcomes for all other values of $n$.
\end{defin}

An IFS may or may not possess some kind of attractor, see below. But, if it
does possess an attractor $A\subset X$, and if a chaos game $\left\{
x_{n}\right\} _{n=0}^{\infty }$ is such that $\overline{\left\{
x_{n}\right\} _{n=0}^{\infty }}=A$, where "$\ \overline{ \cdot}\ $" denotes
topological closure in $X$, then we say "the chaos game yields the
attractor" or some equivalent statement.

\begin{defin}
We call a nonempty compact set $A\subset X$ a (\textbf{strict) attractor} of 
$\mathcal{W}$, when $A$ admits an open neighbourhood $U$ such that $%
W^{k}(S)\rightarrow A$ as $k\rightarrow \infty $, for all nonempty compact $%
S\subset U$. The convergence is understood in the Vietoris sense. The
maximal open set $U$ with the above property is called the \textbf{basin of
(the attractor) }$A$, and is denoted by $B(A)$.
\end{defin}

If $X$ is metrizable with metric $d_{X}$, then it is well-known that the
associated Hausdorff metric, $d_{H(X)}$, induces the Vietoris topology on $%
\mathcal{K}(X)$, \cite{K}.

The question of when the existence of an attractor implies the existence of
a metric with respect to which the IFS is contractive, on a neighborhood of
the attractor, is an active research area, see for example \cite%
{Kameyama,BarnsleyVince-Projective,Vince}.

\begin{example}[\protect\cite{K} Example 4.4.3 (b), \protect\cite%
{BarnsleyVince-Chaos} Example 1]
\label{ex:transitive} Let $X$ be a compactum and $h:X\rightarrow X$ be a
homeomorphism such that $X$ is a minimal invariant set, i.e., if $S\in 
\mathcal{K}(X)$ and $h(S)=S$, then $S=X$. By virtue of the Birkhoff minimal
invariant set theorem (see \cite{Gottschalk,K}) we know that the forward
orbit of any point in $X$ under $h$ is dense in $X$, that is 
\[
\forall _{x_{0}\in X}\;\overline{\{h^{k}(x_{0}):k\geq 0\}}=X.
\]%
A canonical situation of this kind arises for an irrational rotation of the
circle. (Interestingly, a circle is the attractor of a contractive IFS on
the plane, cf. \cite{Circle}.) The IFS $\mathcal{W}=\{X;e,h\}$, where $e$ is
the identity map on $X$, has $A=X$ as a strict attractor with $B(A)=X$.
However, $\mathcal{W}$ is noncontractive and cannot be remetrized into a
contractive system. Moreover, this is an example of an IFS where the
attractor is not point-fibred in the sense of Kieninger (cf. \cite{K}) and
it is not topologically self-similar in the sense of Kameyama (cf. \cite%
{Kameyama}). Thus symbolic techniques, like those in \cite{MauldinUrbanski},
are not directly applicable.

To see that $X$ is the unique strict attractor of $\mathcal{W}$ we observe
the following. First, for all $x_{0}\in X$, 
\[
W^{k}(x_{0})=\{h^{m}(x_{0}):0\leq m\leq k\} \rightarrow\overline{%
\{h^{m}(x_{0}):m\geq0\}}=X. 
\]
Second, for a general $S\in\mathcal{K}(X)$ we have $W^{k}(S)\rightarrow X$.
This is because $W^{k}(x_{0})\subset W^{k}(S)\subset X$ for arbitrary $%
x_{0}\in S$.
\end{example}

Nonmetrizable compact spaces are important for fundamental questions in
measure theory and functional analysis, cf. \cite{Bogachev,Zizler}. However,
Proposition~\ref{th:separability} below shows that it may be cumbersome to
identify a concrete example of a nonmetrizable attractor. Classical
examples, like Tychonoff's product of uncountably many compact factors or
Alexandroff's double circle cannot be used for this purpose, because they
are not separable.

\begin{prop}
\label{th:separability} If $A$ is a strict attractor of a topological IFS,
then $A$ is separable.
\end{prop}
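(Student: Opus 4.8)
The plan is to locate a countable dense subset of $A$ inside the orbit of a single point, exploiting the fact that an IFS has only finitely many generators. Fix any point $x_{0}\in A$ and consider its full orbit
\[
O(x_{0})=\bigcup_{k=0}^{\infty}W^{k}(\{x_{0}\})=\{w_{i_{k}}\circ\cdots\circ w_{i_{1}}(x_{0}):k\geq 0,\ i_{j}\in\{1,\dots,N\}\}.
\]
Since there are only countably many finite words over the alphabet $\{1,\dots,N\}$, the set $O(x_{0})$ is countable. The whole argument then reduces to two claims: that $O(x_{0})$ is dense in $A$, and that $O(x_{0})$ is actually contained in $A$.

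For density, I would use that $A$ is contained in its own basin $B(A)$ (an open neighbourhood of $A$ by definition), so that $\{x_{0}\}$ is a nonempty compact subset of $B(A)$ and hence $W^{k}(\{x_{0}\})\rightarrow A$ in the Vietoris sense. The relevant half of Vietoris convergence is the lower one: for every open $V\subset X$ with $A\cap V\neq\emptyset$, one has $W^{k}(\{x_{0}\})\cap V\neq\emptyset$ for all large $k$. Applying this to an arbitrary $a\in A$ and an arbitrary open $V\ni a$ produces a point of $O(x_{0})$ in $V$, so $a\in\overline{O(x_{0})}$ and therefore $A\subseteq\overline{O(x_{0})}$.

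For containment, I would establish the invariance $W(A)=A$. Since $W:\mathcal{K}(X)\rightarrow\mathcal{K}(X)$ is continuous for the Vietoris topology and $W^{k}(A)\rightarrow A$, continuity gives $W^{k+1}(A)=W(W^{k}(A))\rightarrow W(A)$; as $\mathcal{K}(X)$ is Hausdorff (because $X$ is), limits are unique and $W(A)=A$. In particular $w_{i}(A)\subseteq A$ for every $i$, so, starting from $x_{0}\in A$, induction on word length yields $O(x_{0})\subseteq A$. Combining the two claims, $O(x_{0})$ is a countable subset of $A$ whose closure in $X$ contains $A$; its closure relative to the subspace $A$ is therefore all of $A$, which means $A$ is separable.

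The step I expect to be the crux is the containment $O(x_{0})\subseteq A$, i.e. the invariance of $A$. Without it one only knows $A\subseteq\overline{O(x_{0})}$, and in a general (non-metrizable) Hausdorff space a subspace of a separable space need not be separable, so merely having a countable set dense in a larger set would not suffice. Anchoring $x_{0}$ in $A$ and using invariance is exactly what forces the countable dense set to live inside $A$ itself.
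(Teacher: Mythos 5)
Your proposal is correct and is essentially the paper's own proof: the paper takes the orbit $\bigcup_{k=0}^{\infty}W^{k}(\{a_0\})$ of a point $a_0\in A$ and asserts in one line that it is a countable dense subset of $A$. You have simply supplied the details the paper suppresses---countability from the finite alphabet, density from lower Vietoris convergence, and the containment in $A$ via the invariance $W(A)=A$ (obtained from continuity of $W$ on $\mathcal{K}(X)$ and uniqueness of Vietoris limits), which you rightly identify as the step that makes the argument work in a possibly non-metrizable space.
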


\begin{proof}
Choose any $a_0\in A$. The set
$\bigcup_{k=0}^{\infty} W^k(\{a_0\})\subset A$
is countable and dense in $A$.
\end{proof}

\begin{example}[Non-metrizable attractor]
\label{ex:nonmetric} Let $X=(0,1]\times \{0\}\cup \lbrack 0,1)\times
\{1\}\subset \mathbb{R}^{2}$ be the two arrows space, see for example \cite[%
Example 6.1.20]{Bogachev}. The topology of $X$ is generated by the base of
double intervals 
\[
(x_{0}-r,x_{0}]\times \{0\}\cup (x_{0}-r,x_{0})\times \{1\},
\]%
\[
(x_{1},x_{1}+r)\times \{0\}\cup \lbrack x_{1},x_{1}+r)\times \{1\},
\]%
where $r>0$, $x_{0}\in (0,1]$, $x_{1}\in \lbrack 0,1)$ and double intervals
are tailored to $X$. The space $X$ is known to be compact separable
first-countable and not metrizable. It turns out that $X$ is a strict
attractor of an IFS.

Consider the IFS $\{X;w_{1},w_{2},w_{3}\}$ where 
\[
w_{1}(x,j):=(x/2,j),w_{2}(x,j):=((x+1)/2,j),w_{3}(x,j):=(1-x,1-j),
\]%
for $(x,j)\in X$. One easily observes that $W(X)=X$ and that taking a
counter-image via $w_{i}$ does not change the \textquotedblleft shape" of a
double interval. For instance 
\begin{eqnarray*}
&&w_{1}^{-1}((x_{1},x_{1}+r)\times \{0\}\cup \lbrack x_{1},x_{1}+r)\times
\{1\}) \\
&=&X\cap ((2x_{1},2x_{1}+2r)\times \{0\}\cup \lbrack 2x_{1},2x_{1}+2r)\times
\{1\}),
\end{eqnarray*}%
\begin{eqnarray*}
&&w_{3}^{-1}((x_{0}-r,x_{0}]\times \{0\}\cup (x_{0}-r,x_{0})\times \{1\}) \\
&=&(1-x_{0},1-x_{0}+r)\times \{0\}\cup \lbrack 1-x_{0},1-x_{0}+r)\times
\{1\}.
\end{eqnarray*}%
Therefore $\mathcal{W}=\{X;w_{1},w_{2},w_{3}\}$ constitutes an IFS of
continuous maps. To show that $X$ is a strict attractor of $\mathcal{W}$, it
is enough to verify this on singletons by inspecting the behavior of
cascades parallel to those appearing in the IFS $\{[0,1];f_{1},f_{2}\}$, $%
f_{i}(x)=(x+i-1)/2$, $x\in \lbrack 0,1]$, $i=1,2$. Namely, if $S\subset (0,1)
$ is dense in $(0,1)$ w.r.t. the Euclidean topology, then $S\times \{0,1\}$
is dense in $X$.
\end{example}

\section{Basins and pointwise basins of attraction}

\label{basin}

The basin of attraction $B(A)$ plays a key role in chaos games on a general
IFS, because it is usually required that $x_{0}$ in Definition \ref{chaosdef}
belongs to $B(A)$, to permit the chaos game to yield the attractor $A$. In
fact, it is only necessary that $x_{0}$ belongs to the pointwise basin of $A$%
, defined next. In this section we examine properties of basins and
pointwise basins.

\begin{defin}
The \textbf{pointwise basin} of a set $\widetilde{A}\subset X$ (w.r.t. the
IFS $\mathcal{W}$) is defined to be 
\[
B_{1}(\widetilde{A})=\{x\in X:W^{k}(\{x\})\rightarrow \widetilde{A}\}.
\]%
Note that $B_{1}(\widetilde{A})$ may be empty. If $\mathop{\mathrm{int}}%
B_{1}(\widetilde{A})\supset $ $\widetilde{A}$, then we say that $\widetilde{A%
}$ is a \textbf{pointwise (strict) attractor }of $\mathcal{W}$.
\end{defin}

\begin{prop}
\label{basprop} Let $A$ be a strict attractor of $\mathcal{W}$ with basin $%
B(A)$. Then $A$ is a pointwise strict attractor of $\mathcal{W}$ and $%
B(A)=B_{1}(A)$.
\end{prop}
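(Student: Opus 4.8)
The plan is to prove the set equality $B(A)=B_{1}(A)$ first, after which the pointwise strict attractor property comes for free. Indeed, since $A$ is a strict attractor it admits a defining open neighbourhood $U$ with $A\subseteq U\subseteq B(A)$, so $B(A)$ is open and contains $A$; once $B_{1}(A)=B(A)$ is known we get $\inte B_{1}(A)=B(A)\supseteq A$, which is exactly the pointwise strict attractor condition. The inclusion $B(A)\subseteq B_{1}(A)$ is immediate, because for $x\in B(A)$ the singleton $\{x\}$ is a nonempty compact subset of $B(A)$, so the defining property of the basin gives $W^{k}(\{x\})\rightarrow A$, i.e. $x\in B_{1}(A)$.

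The substance of the argument is the reverse inclusion $B_{1}(A)\subseteq B(A)$. I would fix $x\in B_{1}(A)$, so $W^{k}(\{x\})\rightarrow A$ in the Vietoris sense. Since $A\subseteq B(A)$ and the upper Vietoris family $\{S\in\mathcal{K}(X):S\subseteq B(A)\}$ is open and contains $A$, the convergence forces $W^{K}(\{x\})\subseteq B(A)$ for some $K$. I would then invoke continuity of the restricted Hutchinson operator $W\colon X\rightarrow\mathcal{K}(X)$, and hence of the iterate $x\mapsto W^{K}(\{x\})$, to pull back this open set: its preimage
\[
V:=\{z\in X:W^{K}(\{z\})\subseteq B(A)\}
\]
is an open neighbourhood of $x$.

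It remains to show $V\subseteq B(A)$, which by maximality of the basin reduces to verifying that $W^{n}(S)\rightarrow A$ for every nonempty compact $S\subseteq V$. The key observation is that $W^{K}(S)=\bigcup_{z\in S}W^{K}(\{z\})$, so $S\subseteq V$ yields $W^{K}(S)\subseteq B(A)$; moreover $W^{K}(S)$ is compact, being a finite union of continuous images of $S$. Applying the basin property to this compact subset of $B(A)$ and using $W^{n}=W^{n-K}\circ W^{K}$ gives $W^{n}(S)\rightarrow A$. Hence $V$ is an open set enjoying the attractor property, so $V\subseteq B(A)$ and in particular $x\in B(A)$. The hard part is precisely this passage from convergence of a single orbit to convergence on a whole neighbourhood; the mechanism that makes it work is the openness of the upper Vietoris set together with the continuity of $W^{K}$ as a map into $\mathcal{K}(X)$, which lets one index $K$ serve uniformly for all nearby starting points.
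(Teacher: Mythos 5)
Your proof is correct, and it rests on the same core mechanism as the paper's: eventual containment $W^{K}(\{x\})\subset B(A)$ (via openness of the upper Vietoris family), continuity of the iterate $W^{K}\colon X\to\mathcal{K}(X)$ to propagate this containment to an open neighbourhood of $x$, and maximality of the basin to place that neighbourhood inside $B(A)$. The genuine difference is how compactness is handled. The paper fixes an arbitrary compact $S\subset B_{1}(A)$, covers it by finitely many neighbourhoods $U_{x_{j}}$ carrying possibly different indices $k(x_{j})$, and invokes normality of $X$ to split $S$ into compact pieces $S_{j}\in\mathcal{K}(U_{x_{j}})$, so as to extract a uniform index $k_{0}$ with $W^{k_{0}}(S)\subset B(A)$, before applying continuity of $W^{k_{0}}$. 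You work with singletons only, and uniformity comes for free: your set $V=\{z\in X: W^{K}(\{z\})\subset B(A)\}$ is by construction open and governed by the single index $K$, and arbitrary compact $S\subset V$ are treated a posteriori via $W^{K}(S)=\bigcup_{z\in S}W^{K}(\{z\})\subset B(A)$ together with $W^{n}=W^{n-K}\circ W^{K}$. Your route is shorter and avoids normality altogether (consonant with the paper's concluding remark that the main results survive in nonnormal Hausdorff spaces), whereas the paper's heavier route explicitly re-derives the attraction of all compact subsets of $B_{1}(A)$ along the way. Note that both arguments --- yours and the paper's final sentence alike --- depend on reading ``maximal open set'' as ``largest open set'': any open set all of whose compact subsets are attracted to $A$ must be contained in $B(A)$. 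That is the reading intended by the paper (the basin is the union of all open sets with the attraction property), so your appeal to maximality is legitimate.
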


\begin{proof}
We shall check only that $B_1(A)\subset B(A)$, because the reverse
inclusion is obvious. Fix $S\in\mathcal{K}(B_1(A))$. For every $x\in S$,
$W^{k}(\{x\})\to A$. Since $B(A) \supset A$ is an open neighbourhood,
there exists $k(x)$ such that $W^{k(x)}(\{x\}) \subset B(A)$. Being each
$W^{k(x)}: X \rightarrow \mathcal{K}(X)$ continuous,
we can find open neighbourhoods $U_x\ni x$ so that
$W^{k(x)}(U_x) \subset B(A)$. 

Next, we take finite subcovering
$\{U_{x_j}\}_{j=1}^{m}$ of $\{U_x\}_{x\in S}$.
By normality of $X$ we can divide $S= \bigcup_{j=1}^{m} S_{j}$
in such a way that $S_{j} \in\mathcal{K}(U_{x_j})$;
cf. \cite{BarnsleyLesniakRypka}. For each separate $j$ one has
$W^{k}(S_j) \to A$, so there exists $k_j$ with the property
\begin{equation}\label{eq:jtersuBA}
W^{k}(S_j) \subset B(A)
\end{equation}
for $k\geq k_{j}$. Hence (\ref{eq:jtersuBA}) holds for
$k\geq k_{0} := \max_{j=1,\dots, m} k_j$. 
Since $W^{k_0}$ is continuous, there exists an open neighbourhood
$U(S)$ of $S$ which is also mapped with $W^{k_0}$ into $B(A)$. 
Being $S$ arbitrary and $U(S)$ open, we can conclude that $B_1(A)\subset
B(A)$.
\end{proof}

The following example illustrates a pointwise strict attractor $\widetilde{A}
$ that is not an attractor.

\begin{example}
\label{exDK} Consider the set of points $X$ on the circle $C$ which may be
projected to integers and infinity on real line $\mathbb{N}^{\ast }$. Let $f:%
\mathbb{N}^{\ast }\rightarrow \mathbb{N}^{\ast }$ be such that $%
f(x)=x+1,x\neq \infty $ and $f(\infty )=\infty $ (see Figure \ref{countex}).
Observe that the map is continuous with respect to the Euclidean metric on
the circle. It is obvious that each point of $X$ is attracted to the north
pole by the induced mapping on $X,$ so we have $B_{1}(\infty )=X$. However, $%
\{\infty \}$ is not an attractor of the IFS $\{X;f\}$.
\end{example}


\begin{figure}[ptb]
%
\centerline{\resizebox{10cm}{!}{\includegraphics[trim = 3cm 4cm 3cm 4cm,
clip]{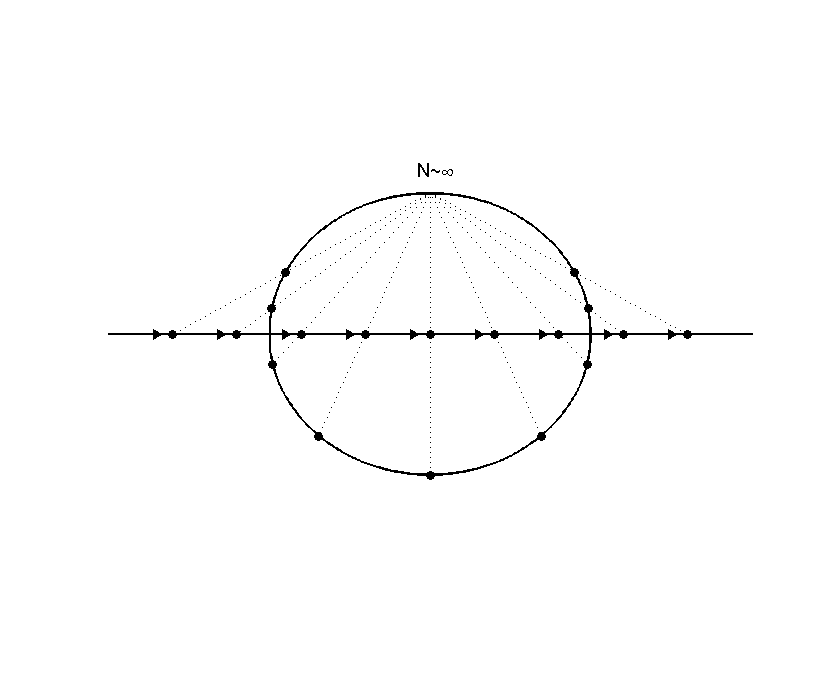}}}
\caption{Counterexample exhibiting that an open pointwise basin need not be
a basin.}
\label{countex}
\end{figure}

A positive criterion for the existence of a strict attractor $A$ according
to the non\-empti\-ness of $B_1(A)$ reads as follows.

\begin{lem}
\label{th:pointwiselem} Let $(X,d)$ be a metric space and $w_i: X\to X$, $%
i=1,\dots,N$ nonexpansive mappings, i.e., 
\[
d(w_i(x_1), w_i(x_2)) \leq d(x_1,x_2) \mbox{ for } x_1,x_2\in X. 
\]
Let $A\in \mathcal{K}(X)$. If $\mathop{\mathrm{int}} B_1(A) \supset A$, then 
$A$ is a strict attractor of $\{X; w_1,\dots,w_N\}$ with the basin $B(A) =
B_1(A)$.
\end{lem}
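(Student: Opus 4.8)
The plan is to reduce everything to a single uniform-convergence statement: for every nonempty compact $S\subset B_1(A)$ one has $W^k(S)\to A$ in the Hausdorff metric $d_{H(X)}$ induced by $d$. Granting this, the open set $U:=\inte B_1(A)$ is, by hypothesis, an open neighbourhood of $A$, and the attracting property $W^k(S)\to A$ holds for every compact $S\subset U\subset B_1(A)$; hence $A$ is a strict attractor and $U\subset B(A)$. The equality $B(A)=B_1(A)$ then follows at once from Proposition \ref{basprop}, which applies as soon as $A$ is known to be a strict attractor. So the whole argument rests on the convergence claim, and this is exactly the place where nonexpansiveness enters: Example \ref{exDK} shows the claim can fail without it.

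The key observation is that nonexpansiveness of the $w_i$ makes $W$, and hence each $W^k$, nonexpansive on $(\mathcal{K}(X),d_{H(X)})$. Indeed, every composition $w_{i_1}\circ\cdots\circ w_{i_k}$ is nonexpansive, so a branchwise estimate gives $d_{H(X)}(W^k(\{x\}),W^k(\{y\}))\le d(x,y)$ for all $x,y\in X$. Writing $g_k(x):=d_{H(X)}(W^k(\{x\}),A)$ and using the reverse triangle inequality for $d_{H(X)}$, this yields $|g_k(x)-g_k(y)|\le d(x,y)$, so the family $\{g_k\}$ is equi-Lipschitz and in particular equicontinuous. On the other hand, the very definition of $B_1(A)$ gives $g_k(x)\to 0$ pointwise for each $x\in B_1(A)$.

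The main step is then to upgrade this pointwise convergence to \emph{uniform} convergence on the compact set $S$. Given $\varepsilon>0$, I would cover $S$ by finitely many $d$-balls of radius $\varepsilon$ centred at points $x_1,\dots,x_m\in S$, pick $K$ so large that $g_k(x_j)<\varepsilon$ for every $j$ and every $k\ge K$, and invoke the equi-Lipschitz bound to conclude $\sup_{x\in S}g_k(x)\le 2\varepsilon$ for $k\ge K$; thus $\sup_{x\in S}g_k(x)\to 0$. Finally I convert this into Hausdorff convergence of the images by splitting $d_{H(X)}(W^k(S),A)$ into its two one-sided excesses. Using the union formula $W^k(S)=\bigcup_{x\in S}W^k(\{x\})$, the excess of $W^k(S)$ over $A$ is bounded by $\sup_{x\in S}g_k(x)\to 0$; for the reverse excess it suffices to fix a single $x_0\in S$, note $W^k(\{x_0\})\subset W^k(S)$, and bound the excess of $A$ over $W^k(S)$ by $g_k(x_0)\to 0$. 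Hence $d_{H(X)}(W^k(S),A)\to 0$, which proves the convergence claim. The only genuine obstacle is the pointwise-to-uniform upgrade, and the equi-Lipschitz property supplied by nonexpansiveness is precisely what removes it.
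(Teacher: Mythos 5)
Your proof is correct and is essentially the paper's own argument: both exploit nonexpansiveness to transfer the pointwise convergence $W^{k}(\{x\})\to A$, known at the finitely many points of an $\varepsilon$-net of $S$, to the whole compact set $S$. The paper does this in one stroke via $d_{H}(W^{k}(S),W^{k}(S_{\varepsilon}))\leq d_{H}(S,S_{\varepsilon})<\varepsilon$ together with $W^{k}(S_{\varepsilon})\to A$ (additivity of $W$ over a finite subset of $B_{1}(A)$) and the triangle inequality in $(\mathcal{K}(X),d_{H})$, while you repackage the same estimate as equi-Lipschitzness of $g_{k}(x)=d_{H}(W^{k}(\{x\}),A)$ plus a pointwise-to-uniform upgrade and excess splitting.
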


\begin{proof}
Fix ${\varepsilon} > 0$ and a nonempty compact $S\subset B_1(A)$.
Choose a finite net $S_{\varepsilon}\subset S$,
$d_{H}(S_{\varepsilon},S)<\varepsilon$. From the additivity of $W$ it follows
that $W^{k}(S_{\varepsilon}) \to A$, because
$S_{\varepsilon}$ is a finite subset of $B_1(A)$.
Then $d_{H}(W^{k}(S_{\varepsilon}),A) < \varepsilon$ for large enough $k$.
From nonexpansiveness,
$d_{H}(W^{k}(S), W^{k}(S_{\varepsilon})) \leq d_{H}(S,S_{\varepsilon})$.
Hence we get $d_{H}(W^{k}(S),A) < 2\varepsilon$ for large $k$.
Therefore
\begin{equation}\label{eq:attractor}
W^{k}(S)\to A \mbox{ for compact } S\subset B_1(A).
\end{equation}
\end{proof}

\begin{prop}
\label{basinv} If $\mathop{\mathrm{int}} B_1(A) \supset A$, then the
pointwise basin is open and positively invariant:

\begin{enumerate}
\item[(i)] $\mathop{\mathrm{int}} B_1(A) = B_1(A)$,

\item[(ii)] $W(B_1(A))\subset B_1(A)$.
\end{enumerate}

In particular, the basin of an attractor is positively invariant, $%
W(B(A))\subset B(A)$.
\end{prop}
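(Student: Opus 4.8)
The plan is to reduce both assertions to a single absorption property: if $W^{k}(\{z\})\subset B_{1}(A)$ for some $k\geq 0$, then $z\in B_{1}(A)$. The point is that $W^{k}(\{z\})=\{w_{\omega}(z):|\omega|=k\}$ is a \emph{finite} set (at most $N^{k}$ points), so when it lies in $B_{1}(A)$ it is a finite union of singletons $\{p_{1}\},\dots,\{p_{r}\}$ with each $W^{j}(\{p_{l}\})\to A$. Using the additivity $W^{j}(S_{1}\cup S_{2})=W^{j}(S_{1})\cup W^{j}(S_{2})$ together with the elementary fact that a finite union of sequences each converging to $A$ in the Vietoris sense again converges to $A$ (the same device used in the proof of Lemma \ref{th:pointwiselem}), I obtain
\[
W^{k+j}(\{z\})=W^{j}\Big(\bigcup_{l=1}^{r}\{p_{l}\}\Big)=\bigcup_{l=1}^{r}W^{j}(\{p_{l}\})\longrightarrow A\quad(j\to\infty),
\]
and since a tail of a sequence has the same limit as the sequence, $W^{m}(\{z\})\to A$, i.e.\ $z\in B_{1}(A)$. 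Finiteness of $W^{k}(\{z\})$ is essential here: in the present purely topological setting one only knows convergence of orbits of \emph{points} of $B_{1}(A)$, not of arbitrary compact subsets.

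For (i) I would show $B_{1}(A)\subset\mathop{\mathrm{int}}B_{1}(A)$. Fix $x\in B_{1}(A)$. Since $W^{k}(\{x\})\to A$ and the hypothesis gives $A\subset\mathop{\mathrm{int}}B_{1}(A)$, which is open, upper Vietoris convergence produces some $k$ with $W^{k}(\{x\})\subset\mathop{\mathrm{int}}B_{1}(A)$. The map $z\mapsto W^{k}(\{z\})$ is continuous and $\{K\in\mathcal{K}(X):K\subset\mathop{\mathrm{int}}B_{1}(A)\}$ is Vietoris-open, so there is an open $U_{x}\ni x$ with $W^{k}(\{z\})\subset B_{1}(A)$ for every $z\in U_{x}$. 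The absorption property then forces $U_{x}\subset B_{1}(A)$, so $x$ is interior.

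For (ii) let $x\in B_{1}(A)$ and $i\in\{1,\dots,N\}$, and put $y=w_{i}(x)$. Monotonicity of $W^{k}$ gives $W^{k}(\{y\})=W^{k}(w_{i}(\{x\}))\subset W^{k+1}(\{x\})$, and since $W^{k+1}(\{x\})\to A\subset\mathop{\mathrm{int}}B_{1}(A)$ with $\mathop{\mathrm{int}}B_{1}(A)$ open, upper Vietoris convergence yields some $k$ with $W^{k}(\{y\})\subset B_{1}(A)$. The absorption property then gives $y=w_{i}(x)\in B_{1}(A)$, whence $W(B_{1}(A))=\bigcup_{i}w_{i}(B_{1}(A))\subset B_{1}(A)$. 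The final clause is immediate from Proposition \ref{basprop}: a strict attractor $A$ satisfies $\mathop{\mathrm{int}}B_{1}(A)\supset A$ and $B(A)=B_{1}(A)$, so (ii) specializes to $W(B(A))\subset B(A)$.

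I expect the main obstacle to be the \emph{inner} (lower Vietoris) half of convergence. A direct attempt at (ii) would try to prove $W^{k}(\{w_{i}(x)\})\to A$ outright, but $W^{k}(\{w_{i}(x)\})$ is merely a \emph{subset} of $W^{k+1}(\{x\})$, and a subset of a set converging to $A$ need not meet every neighbourhood of $A$, so the inner convergence along the single branch $i$ is not visible. Routing everything through the absorption property sidesteps this, because there the inner convergence is supplied branch by branch by the full convergence $W^{j}(\{p_{l}\})\to A$ of each point already known to lie in $B_{1}(A)$.
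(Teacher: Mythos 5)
Your proof is correct, and its core mechanism is the same one the paper relies on: after finitely many iterations a point's orbit is a \emph{finite} subset of $B_1(A)$, and then additivity of $W$ plus the fact that a finite union of sequences converging to $A$ again converges to $A$ (in the Vietoris sense) pulls the point back into $B_1(A)$. The difference is organizational, and it is most visible in (ii). For (i) your argument and the paper's coincide step for step: upper Vietoris convergence puts $W^{k}(\{x\})$ inside $\mathop{\mathrm{int}} B_1(A)$, continuity of $W^{k}$ gives the neighbourhood $U_x$, and the finiteness/additivity argument (your ``absorption property,'' which the paper runs inline rather than isolating as a lemma) shows $U_x\subset B_1(A)$. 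For (ii), however, the paper does \emph{not} route through absorption: it verifies $W^{k}(y)\to A$ directly, getting the upper half for free from monotonicity ($W^{k}(y)\subset W^{k+1}(x)\subset V$) and the lower half by picking a single point $z\in W^{k_0}(y)\subset \mathop{\mathrm{int}} B_1(A)$ and observing that $W^{k}(z)\subset W^{k+k_0}(y)$, so $W^{k+k_0}(y)$ meets every open set meeting $A$. Your reduction of (ii) to the same absorption lemma as (i) buys uniformity and spares you from ever checking lower Vietoris convergence for $y$ itself; the paper's version is marginally more economical (one point $z$ suffices, not the whole finite set $W^{k}(\{y\})$) and makes explicit that only the lower half of the convergence actually needs the basin hypothesis. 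In this light your closing remark --- that a direct attack on (ii) stalls on the inner convergence --- is slightly too pessimistic: the paper's point-picking device recovers the inner half along a single branch, for exactly the reason your absorption proof works. The final clause about $W(B(A))\subset B(A)$ is handled identically in both, via Proposition \ref{basprop}.
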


\begin{proof}
Ad (i). Denote $V = \inte B_1(A)$. Fix $x\in B_1(A)$.
There exists $k_0$ such that $W^{k_0}(x)\subset V$.
By continuity of $W^{k_0}$
we can find an open neighbourhood $U_{x}$ of $x$ satisfying
$W^{k_0}(U_{x})\subset V$. We shall check that $U_{x}\subset B_1(A)$.
 
Since for each $y\in U_{x}$ the set $W^{k_0}(y)\subset V\subset B_1(A)$
is finite, we get $W^{k}(y) = W^{k-k_{0}}(W^{k_0}(y)) \to A$. Therefore
$x$ belongs to $B_1(A)$ together with its neighbour points $y$.
 
Ad (ii). Let $y\in W(x)$, $x\in B_1(A)$. We will show that $y\in B_1(A)$.
 
Obviously, given an open $V\supset A$, we have that
$W^{k}(y) \subset W^{k+1}(x) \subset V$ for large $k$.
In particular,
\[
W^{k_0}(y)\subset W^{k_{0}+1}(x)\subset \inte B_1(A)
\]
for some $k_0$.
 
Now, fix an open $V\cap A\neq \emptyset$. Pick anyhow $z\in W^{k_{0}}(y)$.
Since $z\in B_1(A)$, we have
\[
W^{k+k_{0}}(y) \cap V \supset W^{k}(z)\cap V\neq\emptyset
\]
for large $k$. Altogether, $W^{k}(y)\to A$. Therefore $y\in B_1(A)$.
 
Invariance of $B(A)$ can be inferred from the above by applying Proposition \ref{basprop}.
\end{proof}

In general $W(B_1(A)) \neq B_1(A)$; just take a projection onto a point.

Finally, we show how important is to iterate compact sets, not merely
(closed) bounded sets like it was the case in the original paper \cite%
{Hutchinson} and its successors.

\begin{example}[\protect\cite{AFGL} Example 2]
Let $X= {\ell}^2$ be the Hilbert space of square summable sequences with an
orthonormal basis $e_1, e_2,\ldots$. We define $w:X\to X$ via 
\[
w(x) = \sum_{i=1}^{\infty} \left(1-\frac{1}{i+1}\right)\cdot \lambda_{i} e_i 
\]
for $x=\sum_{i=1}^{\infty} \lambda_{i} e_{i}$. One easily checks that $w$ is
nonexpansive. We shall prove that $A= \{0\}$ is a strict attractor of the
IFS $\{X; w\}$ with the full basin of attraction $B(A)=X$, but it does not
attract iterates of nonempty closed bounded noncompact sets, no matter how
small these sets are.

First, we examine the convergence 
\begin{equation}  \label{eq:Edelstein}
w^k(x) \to 0 \;\mbox{ where }\; x=\sum_{i=1}^{\infty} \lambda_{i} e_{i},
k\to\infty.
\end{equation}
Obviously 
\[
w^k(x) = \sum_{i=1}^{\infty} \left(1-\frac{1}{i+1}\right)^{k} \cdot
\lambda_{i} e_i. 
\]
We need 
\[
\|w^k(x)\|^2 = \sum_{i=1}^{\infty} \left(1-\frac{1}{i+1}\right)^{2k} \cdot
\lambda_{i}^2 \to 0. 
\]
To verify this we employ the Lebesgue dominated convergence theorem adapted
to series (understood as integrals with respect to the counting measure).
Indeed both, the pointwise convergence 
\[
\left(1-\frac{1}{i+1}\right)^{2k}\cdot \lambda_{i}^2 \to 0,\; i=1,2,\dots 
\]
and the majorization 
\[
\sum_{i=1}^{\infty} \left(1-\frac{1}{i+1}\right)^{2k}\cdot \lambda_{i}^2
\leq \sum_{i=1}^{\infty} \lambda_{i}^2 < \infty,\; k=1,2,\dots, 
\]
hold true. Thus $X=B_1(A)$.

Second, we bootstrap (\ref{eq:Edelstein}) to (\ref{eq:attractor}) via Lemma %
\ref{th:pointwiselem}. Thus $A=\{0\}$ is a strict attractor.

Third, we note that 
\[
\|w^k(x_k)-w^k(0)\| = \left(1-\frac{1}{k+1}\right)^{k} \cdot r\to r\cdot
\exp(-1)>0, 
\]
where $x_k=r\cdot e_k$, which is the element of the sphere $D_r = \{x: \|x\|
=r\}$ at $0$ with radius $r$. Hence $A$ does not attract closed bounded
sets; $\overline{W^k(D_r)} \not\rightarrow A$, for arbitrarily small radii $%
r $.
\end{example}

\section{Chaos game}

\label{chg}

We are ready to show that the chaos game works on topological spaces in a
rather general framework. The initial point of the orbit must be picked from
the pointwise basin of attraction.

\begin{thm}
\label{tmain} Let $\mathcal{W}=\{X; w_1,w_2,\dots,w_N\}$ be an IFS. Let $A$
be a nonempty compact set such that $\mathop{\mathrm{int}} B_1(A)\supset A$.
If $\{x_k\}_{k=0}^{\infty}$ is a random orbit under $W$ starting at $x_0\in
B_1(A)$, then with probability one, 
\[
A=\lim_{K\to\infty} \overline{\{x_k\}_{k=K}^{\infty}}, 
\]
where the limit is taken with respect to the Vietoris topology and each
mapping $w_{i}$ is chosen at least with probability $p>0$.
\end{thm}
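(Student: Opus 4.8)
The plan is to verify the two defining conditions for convergence to $A$ in the Vietoris topology on closed subsets of $X$, keeping a deterministic ``outer'' bound separate from a probabilistic ``inner'' (density) bound. Write $C_K=\overline{\{x_k\}_{k=K}^{\infty}}$. Since $x_k=w_{\sigma_k}\circ\cdots\circ w_{\sigma_1}(x_0)$, every orbit point satisfies $x_k\in W^k(\{x_0\})$, and because $x_0\in B_1(A)$ we have $W^k(\{x_0\})\to A$. Hence for each open $U\supseteq A$ there is $K$ with $W^k(\{x_0\})\subseteq U$, so $x_k\in U$ for $k\geq K$; using normality (regularity) of $X$ to interpolate a set $U'$ with $A\subseteq U'\subseteq\overline{U'}\subseteq U$ gives $C_K\subseteq\overline{U'}\subseteq U$ for large $K$. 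This is the upper-Vietoris half and, at once, shows every cluster point of the orbit lies in $A$; thus the ``$\subseteq A$'' side holds surely. It remains to establish the lower-Vietoris half: almost surely, for every open $V$ with $V\cap A\neq\emptyset$ the orbit meets $V$ infinitely often.

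The crux is a uniform hitting estimate for a fixed such $V$. Each $y\in A$ lies in $B_1(A)$ (since $A\subseteq\inte B_1(A)\subseteq B_1(A)$), so $W^n(\{y\})\to A$ must meet $V$ at some step $n_y$; that is, there is a word $\tau^{(y)}$ of length $n_y$ with $w_{\tau^{(y)}}(y)\in V$, and by continuity $G_y:=(w_{\tau^{(y)}})^{-1}(V)$ is an open neighbourhood of $y$, every point of which $\tau^{(y)}$ steers into $V$. By compactness of $A$ I extract a finite subcover $G_{y_1},\dots,G_{y_r}$, set $G=\bigcup_j G_{y_j}\supseteq A$ (open) and $N=\max_j n_{y_j}$. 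Then from \emph{any} single point $x\in G$ there is one admissible word of length at most $N$ carrying $x$ into $V$, so, conditioned on the orbit being located at $x$, the next random symbols reproduce that word with probability at least $p^{N}=:q>0$, uniformly in $x\in G$. Securing this lower bound independently of where the random, moving orbit currently sits is the main obstacle, and it is precisely what compactness of $A$ together with $A\subseteq\inte B_1(A)$ delivers.

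The remaining probabilistic step is a conditional Borel--Cantelli argument. By the outer bound there is a deterministic $K_0$ with $x_k\in G$ for all $k\geq K_0$. I cut the times $\geq K_0$ into consecutive blocks of length $N$ and let $E_j$ be the event that the orbit enters $V$ during the $j$-th block. Since the orbit sits in $G$ at the start of each block, $\Pp(E_j\mid\mathcal F_{j-1})\geq q$ for every $j$, where $\mathcal F_{j-1}$ records the symbols chosen before that block. These conditional probabilities sum to infinity, so by the L\'evy (conditional) form of the second Borel--Cantelli lemma infinitely many $E_j$ occur almost surely; hence, for this fixed $V$, the orbit meets $V$ infinitely often with probability one, giving $C_K\cap V\neq\emptyset$ for all $K$.

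Finally I must upgrade ``for each $V$, almost surely'' to ``almost surely, for all $V$''. Here I would invoke separability of $A$ (Proposition~\ref{th:separability}): fix a countable dense $D\subseteq A$, choose a countable family of open test sets adapted to $D$ that detects density on $A$, apply the previous step to each (a countable list of probability-one events), and intersect. On the resulting full-measure event every relatively open piece of $A$ is met infinitely often, so $A\subseteq C_K$ for all $K$; combined with the deterministic inclusion $C_K\subseteq U$ for every open $U\supseteq A$, this is exactly $C_K\to A$ in the Vietoris topology. The delicate point is that $X$ need not be first countable, so the countable determining family cannot come from neighbourhood bases and must be built from separability alone; this, rather than any individual estimate, is where care is required.
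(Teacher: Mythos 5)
Your proposal reproduces the paper's own proof in all essentials: the deterministic upper-Vietoris bound obtained from $W^k(x_0)\to A$ plus a normality interpolation, a compactness-and-continuity covering argument yielding a uniform word length and hitting probability $q=p^{N}$ for a fixed open $V$ meeting $A$, and a conditional second Borel--Cantelli lemma. Your two deviations are, if anything, small improvements. The paper covers the compact set $K=A\cup\overline{\bigcup_{k}W^{k}(x_0)}$ by hitting neighbourhoods, so it must invoke Proposition \ref{basinv} to know $K\subset B_1(A)$; you cover only $A$ and then use the sure outer bound to force the orbit into $G\supset A$ after a deterministic time $K_0$, which needs nothing beyond $A\subset B_1(A)$. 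Also, your block events with $\Pp(E_j\mid\mathcal F_{j-1})\geq q$ give ``infinitely often'' directly, whereas the paper's events $E_n$ (``$x_k\in V$ for some $k\leq n$'') are increasing, so ``$E_{nk_{*}}$ occurs for infinitely many $n$'' literally yields only one visit to $V$; the paper's claim that \emph{all} tails meet $V$ requires restarting its argument at each $x_{k_0}$ (countably many times), a step your block formulation renders unnecessary.

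The step you flag at the end is the one genuine gap --- but be aware that it is a gap in the paper as well, not something the paper resolves and you missed: the paper proves the hitting statement for each fixed $V$ and then simply asserts, in its closing sentence, the conclusion for all $V$ simultaneously. The clean way to finish, when available, is the one you gesture at: the tails $C_K$ are decreasing nonempty compacta (they lie in $K$), so $\lim_K C_K=\bigcap_K C_K=:C_\infty$ exists automatically, and the outer bound gives $C_\infty\subset A$ surely; it remains to show $C_\infty\supset A$ almost surely, and for this it suffices to apply the fixed-$V$ result to each member of a countable $\pi$-base of $A$, shrunk by regularity so that $\overline{V'}\subset V$ (hitting $V'$ infinitely often forces $C_\infty$ to meet $\overline{V'}\cap A\subset V\cap A$). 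This closes the proof whenever $A$ has countable $\pi$-weight --- e.g.\ $A$ metrizable, or separable and first countable, which covers every example in the paper including the two-arrows attractor of Example \ref{ex:nonmetric}. But a separable compactum can have uncountable $\pi$-weight (e.g.\ $\{0,1\}^{\mathfrak{c}}$), and Proposition \ref{th:separability} (whose proof does extend from strict to pointwise attractors) gives only separability; so for a completely general pointwise attractor the quantifier exchange remains unjustified in your write-up and in the paper alike. In short: your proof is exactly as complete as the published one, and your last paragraph correctly identifies the step that both leave open.
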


\begin{proof}
Fix $x_0\in B_1(A)$. We have
\begin{equation}\label{eq:LimA}
W^{k}(x_0) \to A.
\end{equation}
 
First, for every open set $V\supset A$ there exists $j_0$ such that
\[
\overline{\{x_j: j\geq j_1\}} \subset \overline{\bigcup_{j\geq j_1} W^{j}(x_0)} \subset V
\]
for all $j_1\geq j_0$. This is due to (\ref{eq:LimA}).
 
Second, we will show that, for any open $V\cap A\neq \emptyset$ and all $k_0$,
$\{x_k\}_{k=k_0}^{\infty} \cap V\neq \emptyset$ with probability one.
Let us denote
\[
K = A\cup \bigcup_{k\in \N}W^k(x_0)=\overline{\bigcup_{k\in \N}W^k(x_0)}.
\]
By (\ref{eq:LimA}) the set $K$ is compact. Moreover, $K\subset B_1(A)$, thanks to Proposition \ref{basinv}.
 
Fix an open $V\cap A\neq \emptyset$. To each $x\in K$
we can assign $k(x)$ in such a way that $W^{k(x)}(\{x\}) \cap V\neq \emptyset$.
This is possible, because (\ref{eq:LimA}) holds for $x_0$ replaced with $x\in B_1(A)$.
 
Now we can use the continuity of $W^{k(x)}: X\to \mathcal{K}(X)$.
For each $x\in K$ there exists an open $U_x\ni x$ such that
$W^{k(x)}(y)\cap V\neq \emptyset$ for all  $y\in U_x$.
The open covering $\{U_x\}_{x\in K}$ of $K$ admits a finite subcovering $\{U_{x_i}\}_{i=1}^{m}$.
We put $k_{*} = \max \{k(x_i) : i=1,\dots,m\}$. Therefore for every $x\in K$ there exists
$0\leq k \leq k_{*}$ such that $W^{k}(x)\cap V\neq\emptyset$.
Hence there exists a finite word $\sigma_{1}\dots\sigma_{k}\in \{1,\dots,N\}^{k}$
(of the length not exceeding $k_{*}$) which satisfies
$w_{\sigma_{k}}\circ\dots w_{\sigma_{1}}(x) \in V$.
 
Each map $w_{\sigma_{j}}$, $1\leq j\leq k\leq k_{*}$, is drawn with the probability
not less than $p$. Having drawn the point $x_{L}$, $L\geq 0$, the probability that
$x_{L+k}\in V$ is not less than $p^{k_{*}}$.
 
Denote by $E_n$ the event such that $x_k\in V$ for some $k\leq n$.
The complementary event shall be written as $E_n^{c}$.
Taking into account the observations made so far, basic conditional probability calculation shows that
\[
P(E_{(n+1)k_{*}}\,|\,E_{n k_{*}}^{c}) \geq p^{k_{*}}.
\]
Thus $\sum_{n=1}^{\infty} P(E_{(n+1)k_{*}}\,|\,E_{n k_{*}}^{c}) = \infty$.
Moreover $E_{n}\subset E_{n+1}$.
On calling the second Borel--Cantelli lemma (\cite[chap.1 p.18]{Chandra}), we get that,
with probability $1$, $E_{nk_{*}}$ happens for infinitely many $n$.
Overall we are almost sure that all tails $\{x_k\}_{k=k_0}^{\infty}$ intersect
those open sets $V$ which are intersecting an attractor.
\end{proof}

A class of noncontractive IFSs with an attractor may be found in projective
spaces.

\begin{example}
\label{ex:proj} (\cite[Example 3]{BarnsleyVince-Projective}) Consider the
IFS $F = \{\mathbb{P}^2; f_1,f_2\}$, where 
\[
\begin{array}{cc}
f_1 =\left(%
\begin{array}{ccc}
41 & -19 & 19 \\ 
-19 & 41 & 19 \\ 
19 & 19 & 41 \\ 
\end{array}
\right), \  & \ f_2 = \left(%
\begin{array}{ccc}
-10 & -1 & 19 \\ 
-10 & 21 & 1 \\ 
10 & 10 & 10 \\ 
\end{array}
\right).%
\end{array}
\]
Neither of the functions has an attractor. However, the IFS consisting of
both functions has an attractor plotted in Figure \ref{ci}. It is obtained
by means of the chaos game.
\end{example}

\begin{figure}[ptb]
%
\centerline{\resizebox{10cm}{!}{\includegraphics[trim = 1cm 1cm 1cm 1cm,
clip]{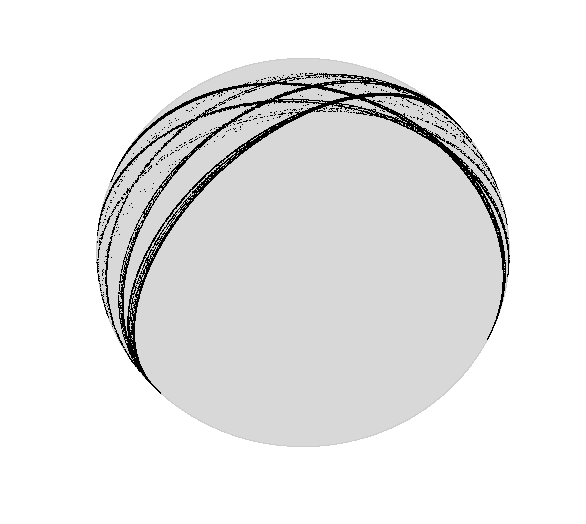}}}
\caption{Chaos game for Example \protect\ref{ex:proj}}
\label{ci}
\end{figure}

\section{\label{conclsec}Concluding remarks}

One can still weaken the uniform positive minorization of drawing
probabilities and allow for some decay in time (say logarithmic and alike).
We refer to \cite{Lesniak} for these more involved nonstationary conditions.

Careful examination of the above proof yields

\begin{thm}[chaos game for a Lasota-Myjak semiattractor]
\label{th:semiattractor} Let $X$ be a Hausdorff topological space. Let $A$
be a compact semiattractor of the IFS $\{X; w_1,w_2,\dots,w_N\}$ comprising
continuous maps. If $\{x_k\}_{k=0}^{\infty}$ is a random orbit under $W$
starting at $x_0\in A$, then with probability one, 
\[
A=\overline{\{x_k: {k=0,1,2,\dots}\}}, 
\]
provided each mapping $w_{i}$ is chosen at least with probability $p>0$.
\end{thm}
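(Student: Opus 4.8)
The plan is to split the desired equality $A=\overline{\{x_k\}_{k\geq 0}}$ into its two inclusions and to power each by one of the two features of a Lasota--Myjak semiattractor that the proof of Theorem~\ref{tmain} actually uses: strict invariance $W(A)=A$, and the transitivity property that the forward orbit $\bigcup_{k}W^{k}(\{x\})$ of \emph{every} point $x\in A$ is dense in $A$. The inclusion $\overline{\{x_k\}}\subseteq A$ is the easy one and it replaces the ``first'' (upper Vietoris) step of Theorem~\ref{tmain}: since $x_0\in A$ and $W(A)\subseteq A$, an immediate induction gives $x_k\in A$ for all $k$, and as $A$ is compact in a Hausdorff space it is closed, so $\overline{\{x_k\}}\subseteq A$. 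Note this inclusion holds surely, not merely almost surely; this is precisely what compensates for the absence of any attraction hypothesis $W^{k}(x_0)\to A$.

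For the reverse inclusion I would mirror the ``second'' step of Theorem~\ref{tmain} almost verbatim, with the compact set $K$ there now played by $A$ itself, so that no separate compactness argument is needed. Fix an open $V$ with $V\cap A\neq\emptyset$. Transitivity supplies, for each $x\in A$, an index $k(x)$ with $W^{k(x)}(\{x\})\cap V\neq\emptyset$; continuity of the finitely many compositions $W^{k(x)}\colon X\to\mathcal{K}(X)$ then yields an open $U_x\ni x$ on which $W^{k(x)}(\,\cdot\,)\cap V\neq\emptyset$, and compactness of $A$ extracts a finite subcover together with a uniform bound $k_{*}=\max_i k(x_i)$. Thus from any current state in $A$ there is a word of length at most $k_{*}$ landing in $V$, so, each letter being drawn with probability at least $p$, the conditional probability of entering $V$ within the next $k_{*}$ steps is at least $p^{k_{*}}$. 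The second Borel--Cantelli lemma, exactly as in the proof of Theorem~\ref{tmain}, then gives that almost surely the orbit meets $V$ (indeed infinitely often).

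To upgrade these per-$V$ statements to the single almost-sure event $A\subseteq\overline{\{x_k\}}$ I would intersect over a countable family of test sets. Here Proposition~\ref{th:separability} is the right tool: its argument shows the deterministic orbit $\bigcup_{k}W^{k}(\{x_0\})$ is a countable dense subset of $A$, so $A$ is separable. Choosing a countable family $\{V_n\}$ of open sets that forms a $\pi$-base for $A$ (every nonempty relatively open subset of $A$ contains some $V_n$, as is the case for the separable semiattractors at issue), the countable intersection $\bigcap_n\{\text{orbit meets }V_n\}$ still has probability one and forces $\overline{\{x_k\}}$ to be dense in $A$. Combined with the first inclusion this yields $A=\overline{\{x_k\}}$ almost surely.

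I expect the main obstacle to be precisely this last reduction. The uniform reaching bound $k_{*}$ and the Borel--Cantelli step are routine once invariance and transitivity are in hand, but turning ``hits every fixed $V$ almost surely'' into ``almost surely hits every $V$'' genuinely needs a countable $\pi$-base, and a separable compact Hausdorff space need not be second countable (the two arrows space of Example~\ref{ex:nonmetric} being the cautionary case). The delicate point is therefore to confirm that the semiattractors under consideration have countable $\pi$-weight, rather than to fall back on metrizability.
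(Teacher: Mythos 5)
Your first two steps are exactly the route the paper intends (its entire ``proof'' of this theorem is the remark that careful examination of the proof of Theorem~\ref{tmain} yields it): forward invariance of a compact semiattractor, $W(A)\subseteq A$, keeps the orbit inside the closed set $A$ and so replaces the upper-Vietoris half of Theorem~\ref{tmain}, while the Lasota--Myjak transitivity property ($\bigcup_k W^k(\{x\})$ dense in $A$ for every $x\in A$) feeds the continuity/compactness/Borel--Cantelli machinery with the paper's compact set $K$ replaced by $A$ itself. Both properties do hold for compact semiattractors of finite systems of continuous maps, and this part of your proposal is correct and coincides with the paper's argument.

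The gap is in your last step, and you have in fact located it yourself: the parenthetical claim that the ``separable semiattractors at issue'' admit a countable $\pi$-base is unjustified, and it is false in general. Separability does not bound the $\pi$-weight of a compact Hausdorff space: the cube $[0,1]^{\mathfrak{c}}$, $\mathfrak{c}=2^{\aleph_0}$, is separable by the Hewitt--Marczewski--Pondiczery theorem, yet any countable family of nonempty open sets constrains only countably many coordinates and so cannot be a $\pi$-base. Worse, such spaces genuinely occur as semiattractors: the torus $\mathbb{T}^{\mathfrak{c}}$ is monothetic, and if $g$ is a topological generator, the IFS $\{\mathbb{T}^{\mathfrak{c}};\,\mathrm{id},\,x\mapsto gx\}$ has the whole group as a compact semiattractor (indeed a strict attractor), since the sets $W^k(x)=\{x,gx,\dots,g^kx\}$ increase to a dense subset; this space has uncountable $\pi$-weight. (Your cautionary example, the two arrows space, is actually harmless here: the double intervals with rational endpoints form a countable $\pi$-base for it.) So the passage from ``a.s.\ hits every \emph{fixed} $V$'' to ``a.s.\ hits \emph{every} $V$'' cannot be secured by separability alone, and in full generality the event ``the orbit is dense in $A$'' is an uncountable intersection of full-measure events whose measurability and probability are genuinely in question. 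You should know that the paper makes the same leap silently---the final sentence of the proof of Theorem~\ref{tmain} performs precisely this uncountable intersection with no countability justification---so your proposal is no less complete than the published argument; but as written it is a proof only under an extra hypothesis, e.g.\ metrizability of $A$ (separable metrizable $\Rightarrow$ second countable) or countable $\pi$-weight assumed outright. For the monothetic-group example the conclusion can still be verified by a direct ad hoc argument, so it refutes neither the theorem nor your strategy in that case; it shows only that your step 3, like the paper's, needs a different idea in general.
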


We refer to \cite{LasotaMyjak, LasotaMyjakSzarek} for the notion of
semiattractor.

A number of remarks concerning the above two theorems is in order.

\begin{enumerate}
\item The proof of Theorem \ref{tmain} relies on compactness of $A$ and
continuity of $W$. It works actually in Hausdorff nonnormal spaces too,
although preparatory material from Section \ref{basin} involved normality in
few places.

\item Lasota-Myjak semiattractors are defined for multivalued iterated
function systems so some discontinuity of maps is allowed. Moreover,
semiattractor can be noncompact.

\item We restrict the chaos game to a concrete class of stochastic processes
which randomly draw the maps from a system. Thanks to this we do not have to
validate whether the Markov operator associated with the IFS is
asymptotically stable; in particular, the IFS at our disposal need not
fulfill the standard average contraction condition (cf. \cite%
{LasotaMyjakSzarek}).

\item As already pointed out in \cite{BarnsleyVince-Projective}, the nature
of noncontractive IFSs possessing attractors is still not well understood.
\end{enumerate}

The authors would like to thank Dominik Kwietniak for suggesting Example \ref%
{exDK}. 

\end{document}